\documentclass[12pt]{amsart}
\usepackage[T2A]{fontenc}
\usepackage[cp1251]{inputenc}
\usepackage[english]{babel}
\usepackage{amsmath,latexsym,amsthm,amsfonts,tipa,upgreek}
\usepackage{amssymb,amscd,graphpap, stmaryrd}

\textwidth=18cm \textheight=24cm \hoffset=-3cm \voffset=-2cm

\newcommand\NN{{\mathbb N}}

\newcommand\w{{\omega}}
\newcommand\kk{{\varkappa}}

\newcommand\PP{{\mathcal P}}

\newtheorem{Th}{Theorem}

\newtheorem{Qs}{Question}

\theoremstyle{definition}

\begin{document}

\title{A Note on Partitions of Groups}
\author{Igor Protasov, Sergii Slobodianiuk}
\subjclass{22A15, 54D35, 03E05}
\keywords{partitions of groups, large and thick subsets of a group, $\kk$-bounded topology, maximal topology.}
\date{}
\address{Department of Cybernetics, Kyiv University, Volodymyrska 64, 01033, Kyiv, Ukraine}
\email{i.v.protasov@gmail.com; }
\address{Department of Mechanics and Mathematics, Kyiv University, Volodymyrska 64, Kyiv, Ukraine}
\email{slobodianiuk@yandex.ru}
\maketitle

\begin{abstract} Every infinite group $G$ of regular cardinality can be partitioned $G=A_1\cup A_2$ so that $G\neq FA_1$, $G\neq FA_2$ for every subset $F\subset G$ of cardinality $|F|<|G|$.
In \cite[Problem 13.45]{b3}, the first author asked whether the same is true for each group $G$ of singular cardinality.
We show that an answer depends on the algebraic structure of $G$.
In particular, this is so for each free group but the statement does not hold for every Abelian group $G$ of singular cardinality.
As an application, we prove that every Abelian group of singular cardinality $\kk$ admits maximal translation invariant $\kk$-bounded topology that impossible for all groups of regular cardinality. \end{abstract}

\section{Introduction}

Let $G$ be a group and let $\kk$ be a cardinal, $\kk\le|G|$. 
We denote by $[G]^{<\kk}$ the family of all subsets of $G$ of cardinality $<\kk$, and by $cf\kk$ the cofinality of $\kk$, so an infinite cardinal $\kk$ is regular if and only if $cf\kk=\kk$.
We say that a subset $A$ of $G$ is
\begin{itemize}
\item{} {\em left (right) $\kk$-thick} if, for every $F\in [G]^{<\kk}$ there exists $a\in A$ such that $Fa\subseteq A$ ($aF\subseteq A$);
\item{} {\em $\kk$-thick} if, for every $F\in [G]^{<\kk}$ there exists $a\in A$ such that $FaF\subseteq A$;
\item{} {\em left (right) $\kk$-large} if, there exists $F\in [G]^{<\kk}$ such that $G=FA$ ($G=AF$);
\item{} {\em $\kk$-large} if, there exists $F\in [G]^{<\kk}$ such that $G=FAF$.
\end{itemize}

We observe that $A$ is left $\kk$-thick (right $\kk$-thick, $\kk$-thick) if and only if $G\setminus A$ is not left $\kk$-large (right $\kk$--large, $\kk$-large),
$A$ is left $\kk$-thick (left $\kk$-large) if and only if $A^{-1}$ is right $\kk$-thick (right $\kk$-large), and $A$ is $\kk$-thick ($\kk$-large) if and only if $A^{-1}$ is $\kk$-thick ($\kk$-large).

If $A$ is $\kk$-thick then $A$ is left and right $\kk$-thick. If $A$ is either left or right $\kk$-large then $A$ is $\kk$-large. 
We show that the conversions of these statements do not hold.

For a free group $F_A$ in the alphabet $A$ and $g\in F_A$, $g\neq e$, we denote by $\lambda(g)$ and $\rho(g)$ the first and the last letter in the canonical representation of $g$,
$alph(g)$ denotes the set of all $a\in A$ such that either $a$ or $a^{-1}$ occurs in the canonical representation of $g$.

Now let $|A|>1$, $\kk=|F_A|$. We fix $a\in A$ and put $$S=\{g\in F_A:\lambda(g)\in\{a,a^{-1}\},\rho(g)\in\{a,a^{-1}\}\},$$
$K=\{e,a,a^{-1}\}$. Then $KSK=F_A$ so $S$ is $\kk$-large. Suppose that $G=HS$ for some $H\in[F_A]^{<\w}$.
If $|A|\le\aleph_0$ then $H$ is finite. We take $b\in A\setminus\{a\}$ and choose $n\in\NN$ so that $b^n\notin HS$.
If $|A|>\aleph_0$, we take $c\in A\setminus\{a\}$ such that $c\notin alph(h)$ for each $h\in H$. Then $c\notin HS$.
In both cases, $S$ is not left $\kk$-large. Since $S=S^{-1}$, we see that $S$ is not right $\kk$-large.
Hence, $G\setminus S$ is left and right $\kk$-thick but $G\setminus S$ is not $\kk$-thick.

Let $G$ be an infinite group and let $\kk$ be an infinite cardinal, $\kk\le|G|$.
We say that $G$ is {\em $\kk$-normal} if every subset $F\in[G]^{<\kk}$ is contained in some normal subgroup $H\in[G]^{<\kk}$.
If $G$ is $\kk$-normal then three "thick" sizes and three "large" sizes of subsets of $G$ coincide.
Clearly, each infinite Abelian group $G$ is $\kk$-normal for each $\kk\le|G|$.

\section{Results}~\label{s2}

Our first result follows from the ballean Theorem 3.3 in \cite{b9}. For topological prehistory of this theorem see \cite{b2}, \cite{b4}.
\begin{Th}\label{t1} Every infinite group $G$ can be partitioned into $|G|$ $\kk$-thick subsets provided that either $\kk$ is infinite and $\kk<|G|$ or $\kk=|G|$ and $\kk$ is regular. \end{Th}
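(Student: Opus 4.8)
The plan is to realise the partition as the fibres of a colouring $c\colon G\to|G|$ constructed by transfinite recursion, where at each stage one reserves a monochromatic ``double‑translate block'' $FaF$ for a pair (colour, set). The engine is the trivial monotonicity: if $F\subseteq F'$ and $F'aF'\subseteq A$, then $FaF\subseteq F'aF'\subseteq A$, and (assuming $e\in F$) also $a\in A$. Consequently $A$ is $\kk$-thick as soon as for every member $F$ of a $\subseteq$-cofinal subfamily of $[G]^{<\kk}$ it contains some block $FaF$. So it suffices to arrange that for each colour $i<|G|$ and each $F$ in a fixed cofinal subfamily of $[G]^{<\kk}$, some block $FaF$ has been committed to colour $i$, keeping all committed blocks of distinct colours pairwise disjoint.

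\emph{Case $\kk=|G|$ regular.} Fix a continuous increasing chain $G=\bigcup_{\alpha<\kk}G_\alpha$ of subsets with $e\in G_\alpha$ and $|G_\alpha|<\kk$; regularity of $\kk$ makes $\{G_\alpha:\alpha<\kk\}$ cofinal in $[G]^{<\kk}$. Well-order $\kk\times\kk$ in type $\kk$ and recurse: at the stage of $(i,\alpha)$ the set $U$ of already‑coloured points is a $({<}\kk)$-union of blocks of size $<\kk$, hence $|U|<\kk$ (regularity again), so $|G_\alpha^{-1}UG_\alpha^{-1}|<\kk$; choose $a\in G$ outside it and colour the whole block $G_\alpha aG_\alpha$ by $i$. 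Finally colour all remaining points $0$. Blocks committed at different stages are disjoint by construction, so $c$ is well defined, each class $c^{-1}(i)$ contains $G_\alpha aG_\alpha\ni a$ for every $\alpha$ and is therefore $\kk$-thick, and every colour occurs (via the pair $(i,0)$).

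\emph{Case $\kk$ infinite, $\kk<|G|$.} Run the same scheme, but of length $|G|$: let $\mathcal C$ be a $\subseteq$-cofinal subfamily of $[G]^{<\kk}$, well-order $\mathcal C\times|G|$, and at the stage of $(F,i)$ pick $a\notin F^{-1}UF^{-1}$ and colour $FaF$ by $i$. The freshness step is available because $\kk<|G|$: after fewer than $|G|$ stages $|U|<|G|$ and each block has size $<\kk<|G|$, and since products and sums of fewer than $|G|$ cardinals each below $|G|$ stay below $|G|$, we get $|F^{-1}UF^{-1}|<|G|$ even when $|G|$ is singular. This argument goes through verbatim whenever $\mathcal C$ can be chosen of size $\le|G|$, i.e.\ whenever $\mathrm{cf}([G]^{<\kk})\le|G|$ — in particular whenever $|[G]^{<\kk}|=|G|$.

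The main obstacle is precisely the leftover sub-case of $\kk<|G|$ in which $\mathrm{cf}([G]^{<\kk})>|G|$, for instance $|G|=\aleph_\omega$, $\kk=\aleph_1$. There no colour class can contain a cofinal family of $\kk$-blocks \emph{pairwise disjointly}: that would force $|A_i|>|G|$, whereas a $\kk$-thick set can already be forced to have size exactly $|G|$ (e.g.\ in $G=\bigoplus_{\aleph_\omega}\ZZ$ every $\aleph_1$-thick set has full support, hence size $\aleph_\omega$). So one must replace the recursion by a construction that packs $\mathrm{cf}([G]^{<\kk})$ many blocks into each class with heavy but controlled overlap. That is exactly the content of the ballean Theorem~3.3 of \cite{b9}, and I would finish by checking that the group ballean on $G$ with gauge $[G]^{<\kk}$ and balls $B(g,F)=FgF$ is unbounded and satisfies the hypotheses of that theorem in both of the stated cases; the two restrictions on $\kk$ are exactly what make this work — regularity of $\kk=|G|$ gives a cofinal family of size $|G|$, while $\kk<|G|$ gives the room needed to keep finding fresh elements or, in the hard sub-case, to perform the overlapping packing.
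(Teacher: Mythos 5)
The paper gives no self-contained argument for this theorem: its entire ``proof'' is the citation of the ballean Theorem~3.3 from \cite{b9}. Your proposal ultimately leans on the same citation, but only for one residual sub-case, and supplies a genuine elementary construction for everything else, so the comparison is worth making. Your transfinite greedy colouring is correct and complete when $\kk=|G|$ is regular, and also when $\kk<|G|$ and $([G]^{<\kk},\subseteq)$ admits a cofinal subfamily of size at most $|G|$ (which covers, e.g., $\kk=\aleph_0$, and every $\kk$ with $|G|^{<\kk}=|G|$). You correctly isolate the only configuration in which the disjoint-block greedy scheme provably cannot succeed, namely $\mathrm{cf}([G]^{<\kk},\subseteq)>|G|$ (already real for $|G|=\aleph_\omega$, $\kk=\aleph_1$), and there you fall back on \cite{b9} exactly as the authors do. So your route is not more elementary than the paper's in full generality, but it has the merit of making explicit which instances are soft and which genuinely require the packing machinery of \cite{b9}; the paper hides all of this behind one reference.

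Two corrections are needed before this could stand as written. First, the justification ``sums of fewer than $|G|$ cardinals each below $|G|$ stay below $|G|$ even when $|G|$ is singular'' is false as a general statement ($\sum_{n<\omega}\aleph_n=\aleph_\omega$); what actually saves the step is that your summands are uniformly bounded by $\kk<|G|$, so $|U|\le|\gamma|\cdot\kk=\max(|\gamma|,\kk)<|G|$ and hence $|F^{-1}UF^{-1}|\le\max(\kk,|U|)<|G|$. State it that way. Second, the final clause ``I would finish by checking that the group ballean satisfies the hypotheses of that theorem'' is where the dichotomy ``$\kk<|G|$, or $\kk=|G|$ regular'' must actually do its work in the hard sub-case; since that verification is the entire content of the reduction there, it cannot be waved off in one sentence if the proof is meant to be independent of the paper's own one-line citation.
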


Surprisingly, the singular case is "cardinally" different.
\begin{Th}\label{t2} For every group $G$ of singular cardinality $\kk$, the following statements hold
\begin{itemize}
\item[{\it (i)}] if a subset $A$ of $G$ is left $\kk$-thick then $A$ is right $\kk$-large;
\item[{\it (ii)}] $G$ cannot be partitioned into two $\kk$-thick subsets;
\item[{\it (iii)}] if $G$ is $\kk$-normal then, for every finite partition $G=A_1\cup\dots\cup A_n$, at least one cell $A_i$ is $\kk$-large.
\end{itemize}
\end{Th}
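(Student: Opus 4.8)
The plan is to prove (i)--(iii) in order, getting (ii) and (iii) formally out of (i) together with the thick/large duality recorded in the Introduction; only (i) uses genuine cardinal arithmetic. For (i): since $\kk$ is singular, $\lambda:=cf\,\kk<\kk$, so fixing a bijection of $G$ with $\kk$ and cutting along a cofinal sequence of cardinals below $\kk$ I would write $G=\bigcup_{\alpha<\lambda}D_\alpha$ with $|D_\alpha|<\kk$ for each $\alpha$. If $A$ is left $\kk$-thick, for every $\alpha$ I pick $a_\alpha\in A$ with $D_\alpha a_\alpha\subseteq A$ and set $E=\{a_\alpha^{-1}:\alpha<\lambda\}$, so $E\in[G]^{<\kk}$ because $|E|\le\lambda<\kk$; given $g\in G$, choose $\alpha$ with $g\in D_\alpha$, then $ga_\alpha\in D_\alpha a_\alpha\subseteq A$ and $g=(ga_\alpha)a_\alpha^{-1}\in AE$, so $G=AE$ and $A$ is right $\kk$-large. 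For (ii): if $G=A_1\cup A_2$ with both cells $\kk$-thick, then $A_2$ is in particular left $\kk$-thick, hence right $\kk$-large by (i), hence $\kk$-large; but $A_2=G\setminus A_1$ being $\kk$-large contradicts the $\kk$-thickness of $A_1$.

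Item (iii) is the substantial part. Assuming $G$ is $\kk$-normal, so that the three ``thick'' and the three ``large'' properties coincide, I would suppose toward a contradiction that $G=A_1\cup\dots\cup A_n$ with no cell $\kk$-large. Then each $A_i$ is not left $\kk$-large, so $B_i:=G\setminus A_i=\bigcup_{j\ne i}A_j$ is left $\kk$-thick, hence $\kk$-thick, while the partition gives $\bigcap_{i=1}^n B_i=\emptyset$. So it suffices to prove: in a $\kk$-normal group of singular cardinality $\kk$, the intersection of finitely many $\kk$-thick sets is $\kk$-thick (in particular nonempty). By induction this reduces to two sets $P,Q$, and I would use two observations. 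First, two $\kk$-thick sets cannot be disjoint: if $P\cap Q=\emptyset$, then $G=P\cup(G\setminus P)$ is a partition of $G$ into the $\kk$-thick set $P$ and the set $G\setminus P\supseteq Q$, which is $\kk$-thick as a superset of a $\kk$-thick set --- contradicting (ii). Second, for a fixed $F\in[G]^{<\kk}$ the ``left core'' $P_F:=\{g\in G: Fg\subseteq P\}$ of a $\kk$-thick set $P$ is again $\kk$-thick: a short computation (apply the left $\kk$-thickness of $P$ to the set $FH$ for a given $H\in[G]^{<\kk}$) shows $P_F$ is left $\kk$-thick, and left $\kk$-thick equals $\kk$-thick by $\kk$-normality. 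Combining the two: for $\kk$-thick $P,Q$ and any $F\in[G]^{<\kk}$, the $\kk$-thick sets $P_F$ and $Q_F$ meet in some $c$, whence $Fc\subseteq P\cap Q$; thus $P\cap Q$ is left $\kk$-thick, hence $\kk$-thick. Applying this to $B_1,\dots,B_n$ contradicts $\bigcap_i B_i=\emptyset$, so some $A_i$ is $\kk$-large.

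I expect the crux to be exactly the closure of $\kk$-thick sets under finite intersections in the $\kk$-normal singular setting: the analogous statement is false for regular $\kk$, since by Theorem~\ref{t1} one can then even split $G$ into two $\kk$-thick pieces, so any correct argument must genuinely exploit both the singularity of $\kk$ (used in (i) through the covering of $G$ by $\lambda<\kk$ sets of size $<\kk$) and $\kk$-normality (used to erase the left/right distinction so that (i) and the core computation fit together). The idea that unlocks it is to reduce ``thickness of $P\cap Q$'' to ``non-disjointness of the cores $P_F,Q_F$'', and the latter is precisely item (ii).
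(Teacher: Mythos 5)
Your proof is correct, and parts (i) and (ii) coincide with the paper's argument (same decomposition $G=\bigcup_{\alpha<cf\kk}H_\alpha$ with $|H_\alpha|<\kk$, same choice of witnesses $g_\alpha$, same duality for (ii)). Part (iii), however, takes a genuinely different route. The paper argues by induction on the number of cells: if $A_1$ is not $\kk$-large, then $B=A_2\cup\dots\cup A_{n+1}$ is $\kk$-thick, hence (by (i) and $\kk$-normality) $G=FB=FA_2\cup\dots\cup FA_{n+1}$ for some $F\in[G]^{<\kk}$, and the inductive hypothesis applied to this cover yields a large $FA_i$, hence a large $A_i$. You instead prove the stronger structural fact that in a $\kk$-normal group of singular cardinality the $\kk$-thick sets are closed under finite intersections --- equivalently, the $\kk$-large sets form a partition-regular family --- by combining (ii) (two thick sets meet) with the observation that the core $P_F=\{g:Fg\subseteq P\}$ of a thick set is thick. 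Both arguments bottom out in (i) and use $\kk$-normality only to erase the left/right distinction. Your version is slightly longer but buys a reusable lemma and avoids a small wrinkle in the paper's induction (the sets $FA_i$ form a cover rather than a partition, so one must either restate the inductive claim for covers or refine the cover); the paper's induction is more economical. Two cosmetic points you should make explicit: when verifying that $P_F$ and $P\cap Q$ are left $\kk$-thick you need the witness itself to lie in the set, which is handled by assuming without loss of generality that the test set contains $e$ (or by enlarging $H$ to $FH\cup F$); and the passage from ``$A_i$ not $\kk$-large'' to ``$B_i$ left $\kk$-thick'' uses that left $\kk$-large implies $\kk$-large, as recorded in the Introduction.
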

\begin{proof} $(i)$ We write $G$ as a union $G=\bigcup\{H_\alpha:\alpha<cf\kk\}$ of subsets from $[G]^{<\kk}$.
For each $\alpha<cf\kk$, we pick $g_\alpha\in A$ such that $H_\alpha g_\alpha\subseteq A$ so $H_\alpha\subseteq Ag^{-1}_\alpha$.
We put $F=\{g_\alpha^{-1}:\alpha<cf\kk\}$. Then $F\in[G]^{<\kk}$ and $G=\bigcup\{H_\alpha:\alpha< cf\kk\}\subseteq AF$.
Hence, $A$ is right $\kk$-large.

$(ii)$ Suppose that $G$ is partitioned $G=A\cup B$ such that $A$ is $\kk$-thick.
Then $A$ is left $\kk$-thick and, by $(i)$ $A$ is right $\kk$-large. 
Hence, $G\setminus A$ is not right $\kk$-thick and $B$ is not $\kk$-thick.

$(iii)$ We proceed by induction. For $n=1$ the statement is evident. Let $G=A_1\cup\dots\cup A_{n+1}$.
We put $B=A_2\cup\dots\cup A_{n+1}$. If $A_1$ is not large then $B$ is thick.
By $(i)$, $B$ is large so $G=FB$ for some $B\in[G]^{<\kk}$. 
Since $G=FA_2\cup\dots\cup FA_{n+1}$, by the inductive hypothesis, there exists $i\in\{2\dots n+1\}$ such that $FA_i$ is large.
Hence $A_i$ is large.
\end{proof}

By Theorem~\ref{t1}, every infinite group $G$ of regular cardinality $\kk$ can be partitioned $G=B_1\cup B_2$ so that each subset $B_i$ is not left $\kk$-large.
In \cite[Prolem 13.45]{b3}, the first author asked if the same is true for every group $G$ of singular cardinlity $\kk$.
Theorem~\ref{t2}$(iii)$ gives a negative answer for every $\kk$-normal group of singular cardinality $\kk$.
On the other hand, this is so for every free group.
\begin{Th}\label{t3} Every free group $F_A$ can be partitioned $F_A= B_1\cup B_2$ so that each cell $B_i$ is not left $\kk$-large. \end{Th}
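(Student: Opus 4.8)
The plan is to reduce to the case of an infinite alphabet and there to give a direct, self-contained construction based on the last letter of a reduced word. If $A$ is finite, then $\kk=|F_A|=\w$ is regular, and Theorem~\ref{t1} partitions $F_A$ into $\w$ many $\w$-thick cells; merging all but one of them produces a partition $F_A=B_1\cup B_2$ into two sets that are $\w$-thick (a superset of an $\w$-thick set is $\w$-thick, with the same witness), in particular left $\w$-thick. Since $B_2=F_A\setminus B_1$ and $B_1=F_A\setminus B_2$, the observation from the Introduction (left $\w$-thick is equivalent to the complement being not left $\w$-large) then says that neither $B_i$ is left $\w$-large. So I may assume from now on that $A$ is infinite, whence $\kk=|F_A|=|A|=|A^{\pm}|$ with $A^{\pm}:=\{a,a^{-1}:a\in A\}$; I would treat this case uniformly, without distinguishing regular from singular $\kk$.

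First I would fix a partition $A^{\pm}=D_1\cup D_2$ into two pieces of cardinality $\kk$ each, and set $B_2=\{g\in F_A:\rho(g)\in D_2\}$ and $B_1=F_A\setminus B_2=\{e\}\cup\{g\in F_A\setminus\{e\}:\rho(g)\in D_1\}$; since every non-identity element of $F_A$ has a well-defined last letter, this is a genuine two-cell partition. By the Introduction's observation it suffices to show that \emph{both} $B_1$ and $B_2$ are left $\kk$-thick, since then $B_1=F_A\setminus B_2$ and $B_2=F_A\setminus B_1$ are not left $\kk$-large. To prove this, fix $i\in\{1,2\}$ and an arbitrary $F\in[F_A]^{<\kk}$. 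The set $E=\{\rho(f)^{-1}:f\in F,\ f\neq e\}$ has cardinality $\le|F|<\kk=|D_i|$, so one can choose a signed letter $c\in D_i\setminus E$. Then $c$ itself is the witness: $\rho(c)=c\in D_i$ gives $c\in B_i$, and for every $f\in F$ with $f\neq e$ the choice $c\notin E$ means $\rho(f)\neq c^{-1}$, so the concatenation $fc$ is already reduced and $\rho(fc)=c\in D_i$, i.e. $fc\in B_i$; the case $f=e$ is covered since $ec=c\in B_i$. Hence $Fc\subseteq B_i$, and $B_i$ is left $\kk$-thick.

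I do not anticipate a serious obstacle here; the argument is short once two points are noticed. First, one must work with the last letter $\rho$ rather than the first letter $\lambda$: left multiplication by $f$ can produce cancellation at the \emph{front} of $g$, but, for a suitably chosen $g$, never at its \emph{end}, which is exactly what makes $\rho(fg)$ predictable. Second, one uses that a subset of $F_A$ of cardinality $<\kk$ can obstruct fewer than $\kk$ of the $\kk$ available signed letters as possible last letters of $fc$, so a fresh letter in the prescribed half $D_i$ is always available — this is the one place where $|A|=\kk$ is used, and it is also where it becomes clear that regularity of $\kk$ is not needed. (Consistency check: by Theorem~\ref{t2}$(i)$, in the singular case these cells $B_i$ are nonetheless right $\kk$-large, hence $\kk$-large, which is fine, and free groups of singular cardinality are not $\kk$-normal, so Theorem~\ref{t2}$(iii)$ does not apply.)
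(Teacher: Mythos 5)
Your proposal is correct and takes essentially the same route as the paper: partition $F_A$ according to the last letter $\rho(g)$ of the reduced word and use a ``fresh'' signed letter $c$, unobstructed by the fewer-than-$\kk$ elements of the test set, as the witness. The only cosmetic differences are that you verify left $\kk$-thickness of each cell and then pass to complements, whereas the paper directly exhibits an element $c\notin HB_1$, and that you treat countably infinite $A$ by the same direct argument instead of folding it into the appeal to Theorem~\ref{t1}.
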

\begin{proof} In view of Theorem~\ref{t1}, we may suppose that $|A|>\aleph_0$, so $|A|=|F_A|$. We partition $A=A_1\cup A_2$ so that $|A|=|A_1|=|A_2|$, and put
$$B_1=\{g\in F_A:\rho(g)\in A_1\cup A_1^{-1}\},\text{ } B_2=F_A\setminus B_1.$$
Assume that $F_A=HB_1$ for some $H\in[F_A]^{<\kk}$, where $\kk=|F_A|$.
Then we choose $c\in A_2$ such that $c\notin alph(h)$ for any $h\in H$.
Clearly, $c\notin HB_1$ and $B_1$ is not left $\kk$-large. Analogously, $B_2$ is not left $\kk$-large.
\end{proof}

\section{Applications}

A topological space $X$ with no isolated points is called {\em maximal} if $X$ has an isolated point in any stronger topology.
We note that $X$ is maximal if and only if, for every point $x\in X$, there is only one free ultrafilter converging to $x$.

A topology $\tau$ on a group $G$ is called {\em left invariant} if, for every $g\in G$, the left shift $x\mapsto gx:\text{ }G\to G$ is continuous in $\tau$.
A group $G$ endowed with a left invariant topology is called {\em left topological}. 
We say that a left topological group $G$ is maximal if $G$ is maximal as a topological space.
By \cite[$\S$2]{b5}, every infinite group $G$ of cardinality $\kk$ admits $2^{2^\kk}$ distinct maximal left invariant topologies.

A left topological group $G$ of cardinality $\kk$ is called $\kk$-bounded if each neighborhood $U$ of the identity $e$ (equivalently, each non-empty open subset of $G$) is left $\kk$-large.
We note that each left $\kk$-thick subset of $G$ meets every left $\kk$-large subset of $G$. 
It follows that $A$ is dense in every $\kk$-bounded topology on $G$.
Hence, if $G$ can be partitioned into two left $\kk$-thick subsets then every $\kk$-bounded topology on $G$ is not maximal.
Applying Theorems ~\ref{t1} and ~\ref{t3}, we get the following theorem.

\begin{Th} An infinite group $G$ of cardinality $\kk$ admits no maximal left invariant $\kk$-bounded topology provided that either $\kk$ is regular or $G$ is a free group. \end{Th}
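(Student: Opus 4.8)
The plan is to argue by contradiction. Suppose $\tau$ is a maximal left invariant $\kk$-bounded topology on $G$. First I would check that $\tau$ has no isolated points, so that maximality is meaningful at all: if $\{x\}$ were $\tau$-open it would be left $\kk$-large, i.e. $G=Fx$ for some $F\in[G]^{<\kk}$, which is impossible since $|Fx|=|F|<\kk=|G|$. The goal is then to produce a partition $G=B_1\cup B_2$ into two left $\kk$-thick cells; as already indicated in the text, each $B_i$ will turn out to be $\tau$-dense, and this will contradict maximality of $\tau$.

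To get the partition I would split into the two hypotheses. If $\kk$ is regular, apply Theorem~\ref{t1} with $\kk=|G|$ to write $G$ as a union of $|G|$ subsets that are $\kk$-thick, hence left $\kk$-thick; taking $B_1$ to be one of these cells and $B_2$ the union of the rest (a superset of a left $\kk$-thick cell, hence again left $\kk$-thick) gives the desired partition. If $G=F_A$ is a free group, apply Theorem~\ref{t3} to get $F_A=B_1\cup B_2$ with neither cell left $\kk$-large; by the remark in the Introduction ($A$ is left $\kk$-thick iff $G\setminus A$ is not left $\kk$-large), and since $G\setminus B_1=B_2$ and $G\setminus B_2=B_1$, both $B_1$ and $B_2$ are left $\kk$-thick. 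Note both cells are nonempty, since $G$ itself is left $\kk$-large.

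Next I would spell out the density claim: a left $\kk$-thick set $B$ meets every left $\kk$-large set $C$. Indeed, pick $F\in[G]^{<\kk}$ with $G=FC$, then (applying left $\kk$-thickness to $F^{-1}\in[G]^{<\kk}$) pick $b\in B$ with $F^{-1}b\subseteq B$; writing $b=fc$ with $f\in F$, $c\in C$, we get $c=f^{-1}b\in F^{-1}b\subseteq B$, so $c\in B\cap C$. Since by $\kk$-boundedness every nonempty $\tau$-open set is left $\kk$-large, both $B_1$ and $B_2$ meet every nonempty $\tau$-open set, i.e. both are $\tau$-dense.

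Finally, to derive the contradiction I would invoke that a maximal space is submaximal, so every $\tau$-dense set is $\tau$-open; thus $B_1$ is $\tau$-open, hence $\tau$-clopen (its complement $B_2$ is also dense, hence open), which forces $B_1=\overline{B_1}=G$ and contradicts $B_2\neq\emptyset$. Equivalently and more in the spirit of the paper: $B_1$ open would make $B_1$ left $\kk$-large, while $B_2=G\setminus B_1$ being left $\kk$-thick makes $B_1$ \emph{not} left $\kk$-large. The only step I expect to need care is this last, purely topological one --- that a maximal topology cannot be refined by adjoining a dense, codense subset as an open set without creating isolated points. This is the standard submaximality of maximal spaces; in the $T_1$ setting it follows quickly from the fact that a dense subset of a space without isolated points again has no isolated points in its subspace topology, so the refined topology generated by $\tau\cup\{B_1\}$ is strictly finer than $\tau$ and still crowded. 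Everything else is a direct assembly of Theorems~\ref{t1} and~\ref{t3} with the elementary ``thick meets large'' property.
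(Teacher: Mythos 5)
Your proof is correct and follows essentially the same route as the paper: the paper's own argument is just the paragraph preceding the theorem (each left $\kk$-thick set meets every left $\kk$-large set, hence is dense in any $\kk$-bounded topology, so a partition into two left $\kk$-thick cells --- supplied by Theorem~\ref{t1} in the regular case and by Theorem~\ref{t3} together with the thick/large duality in the free case --- rules out maximality, since a maximal space cannot be split into two disjoint dense subsets). If anything you are more careful than the paper about the final topological step; the $T_1$-type hypothesis you flag there can in fact be avoided in this setting, since in a $\kk$-bounded topology every nonempty open set meets each left $\kk$-thick $B_i$ in a set of cardinality $\kk$ (not just nonemptily), so each $B_i$ supports a free ultrafilter converging to any given point, and two such distinct ultrafilters contradict the characterization of maximality stated at the start of Section 3.
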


The following theorem answers affirmatively Question 4.4 from \cite{b5}.
\begin{Th}\label{t5} Every $\kk$-normal group $G$ of singular cardinality $\kk$ admits a maximal left invariant $\kk$-bounded topology. \end{Th}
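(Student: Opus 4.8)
The plan is to produce the topology from an idempotent ultrafilter on $G$ that is confined to a closed subsemigroup of $\beta G$ every member of which consists only of left $\kk$-large sets. The first step is purely combinatorial. Let $\II$ be the family of all subsets of $G$ that are \emph{not} left $\kk$-large. Since $\kk$ is singular it is uncountable, so every subset of cardinality $<\kk$ — in particular every finite one — belongs to $\II$; also $\II$ is downward closed and $G\notin\II$. The essential point is that $\II$ is closed under finite unions: if $A,B\in\II$ but $G=F(A\cup B)=FA\cup FB$ for some $F\in[G]^{<\kk}$, then $FA\in\II$ (a witness $G=F'(FA)$ would give a witness $G=(F'F)A$ with $|F'F|<\kk$), likewise $FB\in\II$, so $G=FA\cup(FB\setminus FA)$ is a partition into two non-left-$\kk$-large cells, contradicting Theorem~\ref{t2}$(iii)$; here one uses that $G$ is $\kk$-normal, so ``left $\kk$-large'' and ``$\kk$-large'' coincide. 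Hence $\II$ is a proper ideal containing all $<\kk$-sized sets, its dual $\FF=\{G\setminus A:A\in\II\}$ is a free filter on $G$, and an ultrafilter $p$ refines $\FF$ if and only if every member of $p$ is left $\kk$-large.

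Next I would pass to $\beta G$ with its usual structure of a compact right-topological semigroup (where $A\in p\cdot q$ means $\{g:g^{-1}A\in q\}\in p$) and set $S=\{p\in\beta G:\FF\subseteq p\}$. It is nonempty and closed, hence compact, and it is a subsemigroup: if $A\in\II$ then $g^{-1}A\in\II$ for every $g$ (a witness $G=F(g^{-1}A)$ would be a witness $G=(Fg^{-1})A$), so for $p,q\in S$ one has $g^{-1}A\notin q$ for all $g$, hence $\{g:g^{-1}A\in q\}=\emptyset\notin p$, hence $A\notin p\cdot q$; thus every member of $p\cdot q$ is left $\kk$-large and $p\cdot q\in S$. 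By the Ellis--Numakura lemma $S$ contains an idempotent $p=p\cdot p$, and $p$ is a free ultrafilter because $\FF$ contains all cofinite sets.

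Finally I would feed $p$ into the standard construction of maximal left-invariant topologies from idempotent ultrafilters (the mechanism behind the $2^{2^\kk}$ topologies of \cite{b5}; compare \cite{b2}, \cite{b4}): the filter generated by $\{A\cup\{e\}:A\in p\}$ is the neighbourhood filter of $e$ for a left-invariant topology $\tau$ on $G$, the key condition to be checked — that $\{g\in U:g^{-1}U\in p\}\in p$ for every $U\in p$ — being precisely $p\cdot p=p$. Since $p$ is free, $\{e\}$ is not a $\tau$-neighbourhood of $e$, so $(G,\tau)$ has no isolated points; a free ultrafilter $q$ contains the neighbourhood filter of $e$ if and only if $p\subseteq q$, i.e. $q=p$, so $e$ (hence, by left invariance, every point) has a unique free ultrafilter converging to it and $\tau$ is maximal. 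And every neighbourhood of $e$ is a superset of some $A\in p$, which is left $\kk$-large, so $\tau$ is $\kk$-bounded.

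The main obstacle is the first step, the fact that $\II$ is a proper ideal: this is exactly where all three hypotheses enter — the singularity of $\kk$ and $\kk$-normality, through Theorem~\ref{t2}$(iii)$ — and the earlier part of the paper shows this must fail in general (for regular $\kk$ the complements of non-left-$\kk$-large sets need not even have the finite intersection property, since $G$ splits into two such sets). Everything afterwards is structural: the computation that $S$ is a closed subsemigroup is the short bridge from the combinatorics to the ultrafilter semigroup, and reading a maximal left-invariant topology off an idempotent is part of the standard toolkit for left topological groups.
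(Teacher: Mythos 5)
Your proposal is correct and follows essentially the same route as the paper: use Theorem~\ref{t2}$(iii)$ to show that the set of ultrafilters all of whose members are left $\kk$-large is a nonempty closed subsemigroup of $\beta G$, take an idempotent $p$ there, and declare $\{gP\cup\{g\}:P\in p\}$ a neighbourhood base at $g$. The only difference is that you verify by hand (via the ideal $\II$ and the subsemigroup computation) what the paper delegates to a citation of \cite[Theorem 5.7]{b1} and to ``it is easy to see.''
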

\begin{proof}
We use a technique from \cite{b1}: endow $G$ with the discrete topology, identify the Stone-$\check{C}$ech compactification 
$\beta G$ of $G$ with the set of all ultrafilters on $G$ and consider $\beta G$ as a right topological semigroup. We put
$$L=\{q\in\beta G:\text{ each member } Q\in q\text{ is left }\kk\text{-large}\}.$$
Applying Theorem~\ref{t2}$(iii)$ and Theorem 5.7 from \cite{b1}, we conclude that $L\neq\varnothing$. 
It is easy to see that $L$ is a closed subsemigtoup of $\beta G$ so $L$ has some idempotent $p$.
The ultrafilter $p$ defines a desired topology on $G$ in the following way:
for each $g\in G$, the family $\{gP\cup\{g\}:P\in p\}$ forms a base of neighborhoods of $g$.
\end{proof}

We note that the set $L$ from the proof of Theorem~\ref{t5} is a left ideal in $\beta G$.
Hence, the idempotent $p$ could be chosen from some minimal left ideal of $\beta G$.
In this case, each member $P$ of $p$ has a rich combinatorial structure (see \cite[Chapter 14]{b1}).

\section{Comments and Open Questions}

$1.$ For a partition $\PP$ of $G$, we denote by $\PP\wedge\PP^{-1}$ the partition of $G$ into the cells $\{A\cap B^{-1}: A,B\in\PP\}$.
If every cell of $\PP$ is not left $\kk$-large then each cell of $\PP\wedge\PP^{-1}$ is neither left nor right $\kk$-large.
On the other hand, if $|A|$ is singular and $F_A=B_1\cup B_2$, by Theorem~\ref{t2}$(i)$, at least one cell $B_i$ is either left or right $\kk$-large.

We show that a free group $F_A$ of cardinality $\kk$ can be partitioned $F_A=B_1\cup B_2\cup B_3$ so that each subset $B_i$ is neither left nor right $\kk$-large.

If $A$ is infinite, we partition $A$ into three subsets $A=A_1\cup A_2\cup A_3$ of cardinality $\kk$ and put
$$B_1=\{g\in F_A:\lambda(g)\in(A_2\cup A_3)^{\pm1},\text{ }\rho(g)\in(A_2\cup A_3)^{\pm1}\},$$
$$B_2=\{g\in F_A:\lambda(g)\in(A_1\cup A_3)^{\pm1},\text{ }\rho(g)\in(A_1\cup A_3)^{\pm1}\}\setminus B_1,$$
$$B_3=F_A\setminus(B_1\cup B_2).$$

If $2<|A|<\aleph_0$, we partition $A$ into $3$ non-empty subsets $A_1$, $A_2$, $A_3$ and define $B_1,B_2,B_3$ as above.

If $|A|=1$, one can easily partition $F_A$ even into two non $\aleph_0$-large subsets.

At last, let $A=\{a,b\}$. For each word $y\in F_A$ of length $\ge2$, we denote by $\lambda_2(g)$ and $\rho_2(g)$ the first and the last factor of $g$ of length $2$. We put
$$B_1=\{g\in F_A:\lambda_2(g),\rho_2(g)\in \{a^{\pm2},a^{\pm1}b^{\pm1},b^{\pm1}a^{\pm1}\}\},$$
$$B_2=\{g\in F_A:\lambda_2(g),\rho_2(g)\in \{b^{\pm2},a^{\pm1}b^{\pm1},b^{\pm1}a^{\pm1}\}\}\setminus B_1,$$
$$B_3=F_A\setminus(B_1\cup B_2).$$

$2.$ Let $G$ be an infinite group and let $\kk$ be an infinite cardinal, $\kk\le|G|$.
For each $x\in G$, we denote $x^G=\{g^{-1}xg:g\in G\}$.
If $G$ is $\kk$-normal then $|x^G|<\kk$ for each $x\in G$.

We show that Theorem~\ref{t2}$(iii)$ fails to be true if we replace "$G$ is $\kk$-normal" to "$|x^G|<\kk$ for every $x\in G$".

Let $\kk$ be a singular cardinal, $\kk=\bigcup_{\alpha<cf\kk}\kk_\alpha$, $\kk_\alpha<\kk$.
For each $\alpha<cf\kk$ we take an alphabet $A_\alpha$ such that the subsets $\{A_\alpha:\alpha<\kk\}$ are pairwise disjoint and $A_\alpha=\kk_\alpha$.
We consider the direct sum $G=\oplus_{\alpha<cf\kk}F_{A_\alpha}$, denote by $\pi_\alpha: G\to F_{A_\alpha}$ the canonical projection, by $e_\alpha$ the identity of $F_{A_\alpha}$ and fix some $a_\alpha\in A_\alpha$.
For each $g\in G\setminus\{e\}$, we take $\alpha<cf\kk$ such that $\pi_\alpha(g)\neq e_\alpha$ but $\pi_beta(g)=e_\beta$ for each $\beta>\alpha$.
We denote by $\rho(g)$ the last letter of $\pi_\alpha(g)$ and put $B=\{g\in G\setminus\{e\}:\rho(g)\in\{a_\alpha^{\pm1}\},\alpha<cf\kk\}$.
Then $B$ and $G\setminus B$ are not $\kk$-large but $|x^G|<\kk$ for each $x\in G$

$3.$ In \cite{b11} we conjectured that every infinite group $G$ of cardinality $\kk$ can be partitioned 
$G=\bigcup_{n\in\w}A_n$ so that each $A_nA_n^{-1}$ is not left $\kk$-large (and so is not right $\kk$-large).
We confirmed this conjecture for every group of regular cardinality and for some (in particular, Abelian and free) groups of an arbitrary cardinality.
In the general case this problem remains open. Now we formulate a weaker version of the conjecture.
\begin{Qs}\label{q2} Can every infinite group $G$ of cardinality $\kk$ be partitioned $G=\bigcup_{n\in\w}A_n$ so that each $A_n$ is not left $\kk$-large. \end{Qs}

A subset $A$ of a group $G$ is called
\begin{itemize}
\item{\em left (right) $\kk$-small} if $L\setminus A$ is left (right) $\kk$-large for each left (right) $\kk$-large subset $L$ of $G$;
\item{\em $\kk$-small} if $A$ is left and right $\kk$-small.
\end{itemize}

By \cite{b7}, every infinite group $G$ can be partitioned into $\aleph_0$ small subsets.
By \cite[Theorem 5.1]{b10}, $G$ can be partitioned in $\aleph_0$ $\kk$-small subsets for each cardinal $\kk$ such that $\aleph_0\le\kk\le cf|G|$. 
The next question from \cite[Problem 4.2 and 4.3]{b8}
\begin{Qs}\label{q3} Can every infinite group $G$ be partitioned into $\aleph_0$ $|G|$-small subsets? \end{Qs}

We note that an affirmative answer to Question~\ref{q3} would imply an affirmative answer to Question~\ref{q2}.

$4.$ By \cite{b6}, every infinite group $G$ can be partitioned into $\aleph_0$ left and right $\aleph_0$-large subsets.
By \cite[Theorem 2.4]{b8}, every infinite group $G$ can be partitioned into $\kk$ left $\kk$-large subsets for each $\kk\le|G|$.
The next two questions from \cite{b8}.

\begin{Qs} Let $G$ be an infinite group and let $\kk$ be an infinite cardinal $\le|G|$. 
Can $G$ be $\kk$-partitioned so that each cell fo the partition is left and right $\kk$-large?\end{Qs}

Let $G$ be an infinite amenable (in particular Abelian) group and let $\mu$ be a left invariant Banach measure on $G$.
Clearly $\mu(A)>0$ for every left $\aleph_0$-large subset $A$ of $G$.
It follows that $G$ cannot be partitioned into $\aleph_1$ left $\aleph_0$-large subsets.
On the other hand, every free group of infinite rank $\kk$ can be easily partitioned into $\kk$ left $\aleph_0$-large subsets.

\begin{Qs} Let $G$ be a free Abelian group of rank $\aleph_2$. 
Can $G$ be partitioned into $\aleph_2$ $\aleph_1$-large subsets? \end{Qs}

We note that above two questions concern the following general problem of resolvability \cite{b8}.
For a group $G$ and a cardinal $\kk$, we denote 
$$res_\lambda(G,\kk)=\sup\{|\PP|:\PP\text{ is a partition of }G\text{ into left } \kk\text{-large subsets}\},$$
$$res(G,\kk)=\sup\{|\PP|:\PP\text{ is a partition of }G\text{ into left and right } \kk\text{-large subsets}\}.$$

{\em Given a group $G$ and a cardinal $\kk$, detect or evaluate the cardinals $res_\lambda(G,\kk)$ and $res(G,\kk)$.}

$5.$ By \cite[Corollary 2.7]{b5}, every infinite group $G$ admits a maximal regular left invariant topology (see also \cite[Chapter 9]{b1}).
\begin{Qs} Let $G$ be an Abelian group of singular cardinality $\kk$.
Does $G$ admit a maximal regular translation invariant $\kk$-bounded topology? \end{Qs}

\end{document}